\newtheorem{theorem}{Theorem}[section]
\newtheorem*{theorem A}{Theorem A}
\newtheorem*{theorem B}{N\"olker's Theorem}
\theoremstyle{remark}
\theoremstyle{remark}
\theoremstyle{definition}
\newtheorem{definition}{Definition}[section]
\numberwithin{equation}{section}
\def\({\left ( }
\def\){\right )}
\def\<{\left < }
\def\>{\right >}
\begin{document}

\vspace{2cm}

\title{On the properties of new families of generalized Fibonacci numbers}

\author{Gamaliel Cerda-Morales}
\address{Centro Superior de Docencia en Ciencias B\'asicas, Escuela de Pedagog\'ia en Matem\'aticas, Universidad Austral de Chile, Puerto Montt, Chile.}
\email{gamaliel.cerda@uach.cl}

\subjclass[2000]{Primary 11B37, 11B50, 11B83 Secondary  05A15.}


\keywords{Horadam sequences, generalized Fibonacci numbers, generating matrix, generating function, recurrence relations.}

\begin{abstract}
In this paper, new families of generalized Fibonacci and Lucas numbers are introduced. In addition, we present the recurrence relations and the generating functions of the new families for $k=2$.
\end{abstract}
\maketitle

\section{Introduction}
There are a lot of integer sequences such as Fibonacci, Pell, Jacobsthal, Balancing, Lucas, Pell-Lucas, Jacobsthal-Lucas, Lucas-balancing, etc. For example, Horadam numbers are used as a generalized of Fibonacci and Lucas sequences by scientists for basic theories and their applications. For interest application of these numbers in science and nature, one can see \cite{At,Hor1,Ka-Me,Ka,Yi-Ta}. For instance, some special cases of the Horadam numbers could be derived directly using a new matrix representation, see \cite{Ce1,Ce-Mo}. 

In 1965, Horadam studied some properties of sequences of the type, $\{W_{n}\}_{n\geq0}$ or $\{W_{n}(a,b;p,q)\}$, where $a, b$ are nonnegative integers and $p, q$ are arbitrary integers, see \cite{Hor1}. Such sequences are defined by the recurrence relations of second order
\begin{equation}
W_{n}=pW_{n-1}-qW_{n-2},\ n\geq 2, 
\end{equation}
with $W_{0}=a$ and $W_{1}=b$. We are interested in the following two special cases of $\{W_{n}\}$: $\{U_{n}\}$ is defined by $U_{0}=0$, $U_{1}=1$, and $\{V_{n}\}$ is defined by $V_{0}=2$, $V_{1}=p$. It is well known that $\{U_{n}\}$ and
$\{V_{n}\}$ can be expressed in the form
\begin{equation}
U_{n}=\frac{\alpha^{n}-\beta^{n}}{\alpha-\beta},\ V_{n}=\alpha^{n}+\beta^{n},
\end{equation}
where $\alpha=\frac{p+\sqrt{\Delta}}{2}$, $\beta=\frac{p-\sqrt{\Delta}}{2}$ and the discriminant is $\Delta=p^{2}-4q$. Especially, if $p=-q=1$, $p=-2q=2$, $2p=-q=2$ and $p=6q=6$, then $\{U_{n}\}_{n\geq0}$ is the usual Fibonacci, Pell, Jacobsthal and Balancing sequence, respectively.

The aim of this work is to study some properties of two new sequences that generalize the $\{U_{n}\}_{n\geq0}$ and $\{V_{n}\}_{n\geq0}$ numbers. In this work we will follow closely the work of El-Mikkawy and Sogabe (see \cite{El-So}) where the authors give a new family that generalizes the Fibonacci numbers and establish relations with
the ordinary Fibonacci numbers.

So, in the next section we start giving the new definition of generalized Fibonacci and Lucas numbers, and we exhibit some elements of them. We also present relations of these sequences with ordinary $U_{n}$ and $V_{n}$. We deduce some properties of these new families, but using different methods. Furthermore, we study a particular case, that is two sequences of the new defined families for $k = 2$. For these sequences we present some recurrence relations and generating functions.

\section{Main Results}

We define the new generalized Fibonacci and Lucas numbers and obtain some results related to these numbers by using \cite{El-So}.
\begin{definition}\label{def:1}
Let $n$ and $k\neq 0$ be natural numbers. There exist unique numbers $m$ and $r$ such that $n=mk+r$ ($0\leq r<k$). Using these parameters, we define new generalized Fibonacci numbers $U_{n}^{(k)}$ and generalized Lucas numbers $V_{n}^{(k)}$ by 
\begin{equation}\label{eq1}
U_{n}^{(k)}=\frac{1}{(\alpha -\beta )^{k}}(\alpha ^{m+1}-\beta^{m+1})^{r}(\alpha ^{m}-\beta ^{m})^{k-r}  
\end{equation}
and
\begin{equation}\label{eq2}
V_{n}^{(k)}=(\alpha ^{m+1}+\beta ^{m+1})^{r}(\alpha ^{m}+\beta ^{m})^{k-r}
\end{equation}
where $\alpha =\frac{p+\sqrt{\bigtriangleup }}{2}$ and $\beta =\frac{p-\sqrt{\bigtriangleup }}{2}$, respectively.
\end{definition}

The first few numbers of the new generalized Fibonacci and generalized Lucas
numbers as follows
\begin{equation}
\{U_{n}^{(1)}\}_{n=0}^{3}=\{0,1,p,p^{2}-q\},\text{ }\{V_{n}^{(1)}\}_{n=0}^{3}=\{2,p,p^{2}-2q,p^{3}-3pq\},
\end{equation}
\begin{equation}
\{U_{n}^{(2)}\}_{n=0}^{3}=\{0,0,1,p\},\text{ }\{V_{n}^{(2)}\}_{n=0}^{3}=\{4,2p,p^{2},p^{3}-2pq\}, 
\end{equation}
\begin{equation}
\{U_{n}^{(3)}\}_{n=0}^{3}=\{0,0,0,1\},\text{ }\{V_{n}^{(3)}\}_{n=0}^{3}=\{8,4p,2p^{2},8\}
\end{equation}
for $k=1,2,3$.

Given $n$ a nonnegative integer and $k$ a natural number, then $$U_{mk+r}^{(k)}=U_{m}^{k-r}U_{m+1}^{r}\ \textrm{and}\ V_{mk+r}^{(k)}=V_{m}^{k-r}V_{m+1}^{r},$$ where $m$ and $r$ are nonnegative integers such that $n= mk + r$ ($0\leq r < k$). Note that $U_{n}^{(1)}$ is the generalized Fibonacci numbers $U_{n}$ and $V_{n}^{(1)}$ is the generalized Lucas numbers $V_{n}$.

In the following, we give some properties of the new generalized Fibonacci and generalized Lucas numbers.
\begin{theorem}
Let $k,m\in \{1,2,3,...\}$ be fixed numbers. The new generalized Fibonacci numbers and generalized Lucas numbers satisfy

\begin{enumerate}
\item[(i)] $\sum_{t=0}^{k-1}\binom{k-1}{t}(-p)^{-t}U_{mk+t}^{(k)}=\left( \frac{q}{p}\right) ^{k-1}U_{m}U_{(m-1)(k-1)}^{(k-1)},$
\item[(ii)] $\sum_{t=0}^{k-1}\binom{k-1}{t}(\frac{-p}{q})^{t}U_{mk+t}^{(k)}=(-q)^{1-k}U_{m}U_{(m+2)(k-1)}^{(k-1)}$ and
\item[(iii)] $\sum_{t=0}^{k-1}p^{-t}U_{mk+t}^{(k)}=\left( -\frac{p}{q}\right) p^{-k}\left( \frac{U_{m}}{U_{m-1}}\right) \left(
U_{(m+1)k}^{(k)}-p^{k}U_{mk}^{(k)}\right)$.
\end{enumerate}

\begin{proof}
Note of the definition 1, $U_{n}^{(k)}=U_{m}^{k-r}U_{m+1}^{r},$ where $n=mk+r$ and $0\leq r<k$. Then, (i). 
\begin{align*}
\sum_{t=0}^{k-1}\binom{k-1}{t}(-p)^{-t}U_{mk+t}^{(k)}&=(-p)^{1-k}U_{m}\sum_{t=0}^{k-1}\binom{k-1}{t}(-p)^{k-1-t}U_{m}^{k-1-t}U_{m+1}^{t} \\
&=(-p)^{1-k}U_{m}\sum_{t=0}^{k-1}\binom{k-1}{t}(-pU_{m})^{k-1-t}U_{m+1}^{t}.
\end{align*}
Using the binomial theorem, $\sum_{t=0}^{k-1}\binom{k-1}{t}(-pU_{m})^{k-1-t}U_{m+1}^{t}=(-pU_{m}+U_{m+1})^{k-1}$. Then, 
\begin{align*}
\sum_{t=0}^{k-1}\binom{k-1}{t}(-p)^{-t}U_{mk+t}^{(k)}&=(-p)^{1-k}U_{m}(-pU_{m}+U_{m+1})^{k-1} \\
&=(-p)^{1-k}U_{m}(-qU_{m-1})^{k-1} \\
&=\left( \frac{q}{p}\right) ^{k-1}U_{m}U_{(m-1)(k-1)}^{(k-1)},
\end{align*}
is obtained and this completes the proof.

(ii). 
\begin{align*}
\sum_{t=0}^{k-1}\binom{k-1}{t}(\frac{-p}{q})^{t}U_{mk+t}^{(k)}&=U_{m}\sum_{t=0}^{k-1}\binom{k-1}{t}(-q)^{-t}p^{t}U_{m}^{k-1-t}U_{m+1}^{t} \\
&=(-q)^{1-k}U_{m}\sum_{t=0}^{k-1}\binom{k-1}{t}(-qU_{m})^{k-1-t}(pU_{m+1})^{t}.
\end{align*}
Using the binomial theorem, $\sum_{t=0}^{k-1}\binom{k-1}{t}(-qU_{m})^{k-1-t}(pU_{m+1})^{t}=(pU_{m+1}-qU_{m})^{k-1}.$ Then, 
\begin{align*}
\sum_{t=0}^{k-1}\binom{k-1}{t}(\frac{-p}{q})^{t}U_{mk+t}^{(k)}&=(-q)^{1-k}U_{m}(pU_{m+1}-qU_{m})^{k-1} \\
&=(-q)^{1-k}U_{m}U_{m+2}^{k-1} \\
&=(-q)^{1-k}U_{m}U_{(m+2)(k-1)}^{(k-1)},
\end{align*}
is obtained.

(iii). 
$
\sum_{t=0}^{k-1}p^{-t}U_{mk+t}^{(k)}=\sum_{t=0}^{k-1}p^{-t}U_{m}^{k-t}U_{m+1}^{t}=U_{m}^{k}\sum_{t=0}^{k-1}\left( \frac{U_{m+1}}{pU_{m}}\right) ^{t}$,
but $$\sum_{t=0}^{k-1}\left( \frac{U_{m+1}}{pU_{m}}\right) ^{t}=\frac{\left( \frac{U_{m+1}}{pU_{m}}\right) ^{k}-1}{\frac{U_{m+1}}{pU_{m}}-1}=(pU_{m})^{1-k}\left( \frac{U_{m+1}^{k}-pU_{m}}{U_{m+1}-pU_{m}}\right).$$
Then,
\begin{align*}
\sum_{t=0}^{k-1}p^{-t}U_{mk+t}^{(k)}&=U_{m}^{k}(pU_{m})^{1-k}\left( \frac{U_{m+1}^{k}-p^{k}U_{m}^{k}}{U_{m+1}-pU_{m}}\right)  \\
&=p^{1-k}U_{m}\left( \frac{U_{m+1}^{k}-p^{k}U_{m}^{k}}{-qU_{m-1}}\right)  \\
&=\left( -\frac{p}{q}\right) p^{-k}\left( \frac{U_{m}}{U_{m-1}}\right)\left( U_{(m+1)k}^{(k)}-p^{k}U_{mk}^{(k)}\right).
\end{align*}
\end{proof}
\end{theorem}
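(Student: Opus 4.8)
The plan is to reduce every sum to ordinary generalized Fibonacci numbers $U_j$ by means of the identity $U_{mk+t}^{(k)}=U_m^{\,k-t}U_{m+1}^{\,t}$ (valid for $0\le t<k$), and then to recognize the resulting closed form as a value of the $(k-1)$-sequence via the same identity. For the first two parts I would use the binomial theorem, and for the third a finite geometric series.

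First, for (i) I would substitute $U_{mk+t}^{(k)}=U_m^{\,k-t}U_{m+1}^{\,t}$ into the left side, pull a single factor of $U_m$ out front, and rewrite $(-p)^{-t}$ as $(-p)^{1-k}(-p)^{k-1-t}$ so that the summand becomes $\binom{k-1}{t}(-pU_m)^{k-1-t}U_{m+1}^{\,t}$. The binomial theorem then collapses the sum to $(U_{m+1}-pU_m)^{k-1}$. At this point the key step is the Horadam recurrence in the form $U_{m+1}-pU_m=-qU_{m-1}$, which turns the bracket into $(-q)^{k-1}U_{m-1}^{\,k-1}$; combining the sign factors $(-p)^{1-k}(-q)^{k-1}=(q/p)^{k-1}$ and re-encoding $U_{m-1}^{\,k-1}=U_{(m-1)(k-1)}^{(k-1)}$ (the case $r=0$ of the identity) gives the claim. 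Part (ii) follows the same template: after substitution I would factor out $(-q)^{1-k}U_m$ so that the summand reads $\binom{k-1}{t}(-qU_m)^{k-1-t}(pU_{m+1})^{t}$, apply the binomial theorem to obtain $(pU_{m+1}-qU_m)^{k-1}$, and use the recurrence in the form $pU_{m+1}-qU_m=U_{m+2}$, finally re-encoding $U_{m+2}^{\,k-1}=U_{(m+2)(k-1)}^{(k-1)}$.

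For (iii) the sum is not binomial but geometric. After substituting and factoring out $U_m^{\,k}$, the left side becomes $U_m^{\,k}\sum_{t=0}^{k-1}\big(U_{m+1}/(pU_m)\big)^{t}$, a finite geometric series with ratio $U_{m+1}/(pU_m)$. Summing it and clearing denominators yields the factor $U_{m+1}^{\,k}-p^{k}U_m^{\,k}$ over $U_{m+1}-pU_m$; the recurrence $U_{m+1}-pU_m=-qU_{m-1}$ rewrites the denominator, and recognizing $U_{m+1}^{\,k}=U_{(m+1)k}^{(k)}$ together with $p^{k}U_m^{\,k}=p^{k}U_{mk}^{(k)}$ produces the stated right side.

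I expect the only real delicacy to be the sign bookkeeping in (i) and (ii) — verifying that $(-p)^{1-k}(-q)^{k-1}$ collapses to $(q/p)^{k-1}$ and that the analogous factor $(-q)^{1-k}$ emerges correctly in (ii) — and, in (iii), the tacit assumption that the geometric ratio is not $1$ (equivalently $U_{m-1}\neq 0$), which is exactly the quantity appearing in the denominator of the asserted formula, so that the hypothesis is self-consistent rather than a genuine obstruction.
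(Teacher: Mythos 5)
Your proposal is correct and follows essentially the same route as the paper's own proof: the substitution $U_{mk+t}^{(k)}=U_m^{k-t}U_{m+1}^{t}$, the binomial theorem plus the recurrences $U_{m+1}-pU_m=-qU_{m-1}$ and $pU_{m+1}-qU_m=U_{m+2}$ for (i) and (ii), and the finite geometric series for (iii), with the same re-encoding of pure powers as $(k-1)$- or $k$-indexed values at $r=0$. Your explicit remark that the geometric-series step in (iii) tacitly requires the ratio $U_{m+1}/(pU_m)\neq 1$ (i.e.\ $qU_{m-1}\neq 0$) is a point the paper passes over silently, and is worth noting since $m=1$ gives $U_0=0$ in the stated denominator.
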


\section{A particular case $U_{n}^{(2)}$ and $V_{n}^{(2)}$}

In \cite{Ce1}, the author use a matrix method for generating the generalized Fibonacci numbers by defining the $W(p,q)$-matrix and proved that
\begin{equation}
W(p,q)^{n}=\left[ 
\begin{array}{cc}
p & -q \\ 
1 & 0
\end{array}
\right]^{n}=\left[ 
\begin{array}{cc}
U_{n+1} & -qU_{n} \\ 
U_{n} & -qU_{n-1}
\end{array}
\right],
\end{equation}
for any natural number $n$. 

Thus, for any $n,l\geq 0$ and $n+l\geq 2$, we have
$$
\left[ 
\begin{array}{cc}
U_{n+l} & -qU_{n+l-1} \\ 
U_{n+l-1} & -qU_{n+l-2}
\end{array}
\right]=W(p,q)^{n+l-2}\left[ 
\begin{array}{cc}
p & -q \\ 
1 & 0
\end{array}
\right].
$$
If we compute the determinant of both sides of the previous equality we obtain
\begin{equation}
-qU_{n+l}U_{n+l-2}+qU_{n+l-1}^{2}=q\det(W(p,q)^{n+l-2}).
\end{equation}
which is equivalent to $$U_{n+l-1}^{2}-U_{n+l}U_{n+l-2}=q^{n+l-2}.$$

Since, by Definition \ref{def:1} (where $m =n+l-1$, $k = 2$ and $r = 0$) $$U_{2(n+l-1)}^{(2)}=U_{n+l-1}^{2}.$$
We have proved the following result:
\begin{theorem}\label{th:3}
We have the following relation for the new family of generalized Fibonacci numbers $\{U_{n}^{(2)}\}$ 
\begin{equation}
U_{2(n+l-1)}^{(2)}=q^{n+l-2}+U_{n+l}U_{n+l-2},
\end{equation}
where $n+l\geq 2$.
\end{theorem}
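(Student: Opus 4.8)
The plan is to derive the claimed identity directly from the matrix representation of $W(p,q)$ recorded just above the statement, combining the multiplicativity of the determinant with Definition \ref{def:1}. The whole argument is a short determinant computation followed by a substitution.

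First I would note that $\det W(p,q) = p\cdot 0 - (-q)\cdot 1 = q$, so that by the product rule for determinants $\det\left(W(p,q)^{N}\right) = q^{N}$ for every natural number $N$; I take $N = n+l-2$, which is a nonnegative exponent thanks to the hypothesis $n+l\geq 2$. Next I would take determinants on both sides of the displayed factorization relating the $(n+l)$-indexed matrix to $W(p,q)^{n+l-2}$ times the base matrix. The determinant of the left-hand side, expanded from its entries, equals $-qU_{n+l}U_{n+l-2} + qU_{n+l-1}^{2}$, while the determinant of the right-hand side equals $\det\left(W(p,q)^{n+l-2}\right)\cdot\det W(p,q) = q^{n+l-2}\cdot q$. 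Cancelling the common factor $q$ gives the Cassini-type relation $U_{n+l-1}^{2} - U_{n+l}U_{n+l-2} = q^{n+l-2}$.

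Finally, I would apply Definition \ref{def:1} with the parameters $m = n+l-1$, $k = 2$ and $r = 0$, which yields $U_{2(n+l-1)}^{(2)} = U_{m}^{k-r}U_{m+1}^{r} = U_{n+l-1}^{2}$. Substituting this into the relation above and transposing the product term produces precisely $U_{2(n+l-1)}^{(2)} = q^{n+l-2} + U_{n+l}U_{n+l-2}$, as desired. I do not expect a genuine obstacle here: the argument avoids any division and proceeds entirely through the determinant, so no vanishing-denominator issues arise. The only point demanding attention is the index bookkeeping—in particular, that the extra factor of $q$ coming from $\det W(p,q)$ on the right is exactly what matches the power after cancellation so that the exponent comes out as $q^{n+l-2}$, and that the residue data $(m,r) = (n+l-1,0)$ indeed singles out $U_{n+l-1}^{2}$ as the relevant value of the family $U^{(2)}$.
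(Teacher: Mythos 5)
Your proposal is correct and follows essentially the same route as the paper: taking determinants of the factorization $W(p,q)^{n+l-2}\cdot W(p,q)$, using $\det W(p,q)=q$ to obtain the Cassini-type identity $U_{n+l-1}^{2}-U_{n+l}U_{n+l-2}=q^{n+l-2}$, and then invoking Definition \ref{def:1} with $m=n+l-1$, $k=2$, $r=0$ to rewrite $U_{n+l-1}^{2}$ as $U_{2(n+l-1)}^{(2)}$. Your index bookkeeping matches the paper's argument exactly, so nothing further is needed.
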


We study the particular case of the sequences $\{U_{n}^{(2)}\}_{n\geq0}$ and $\{V_{n}^{(2)}\}_{n\geq0}$ defined by \ref{eq1} and \ref{eq2}, respectively. First, we present a recurrence relation for these sequences.

\begin{theorem}\label{th:4}
The sequences $\{U_{n}^{(2)}\}$ and $\{V_{n}^{(2)}\}$ satisfy, respectively, the following recurrence relations: 
\begin{equation}
U_{n}^{(2)}=pU_{n-1}^{(2)}-pqU_{n-3}^{(2)}+q^{2}U_{n-4}^{(2)},\ n\geq4,
\end{equation}
and
\begin{equation}
V_{n}^{(2)}=pV_{n-1}^{(2)}-pqV_{n-3}^{(2)}+q^{2}V_{n-4}^{(2)},\ n\geq4,
\end{equation}
\end{theorem}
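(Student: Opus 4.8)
The plan is to recognize that the claimed order-four recurrence has characteristic polynomial
\[
\chi(x)=x^{4}-px^{3}+pqx-q^{2}=(x^{2}-px+q)(x^{2}-q),
\]
and to write each of $\{U_{n}^{(2)}\}$ and $\{V_{n}^{(2)}\}$ (up to a fixed nonzero constant) as a sum of two sequences, each annihilated by one of the two quadratic factors. Since the sequences satisfying a fixed linear recurrence form a vector space, this gives the result at once.

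First I would use Definition \ref{def:1} with $k=2$ to split into parities: with $n=2m+r$ one has $U_{2m}^{(2)}=U_{m}^{2}$ and $U_{2m+1}^{(2)}=U_{m}U_{m+1}$. Substituting $U_{j}=\frac{\alpha^{j}-\beta^{j}}{\alpha-\beta}$ and using $\alpha+\beta=p$, $\alpha\beta=q$ gives
\[
(\alpha-\beta)^{2}U_{2m}^{(2)}=(\alpha^{m}-\beta^{m})^{2}=V_{2m}-2q^{m},
\]
\[
(\alpha-\beta)^{2}U_{2m+1}^{(2)}=(\alpha^{m}-\beta^{m})(\alpha^{m+1}-\beta^{m+1})=V_{2m+1}-pq^{m}.
\]
Hence, defining $t_{n}$ by $t_{2m}=-2q^{m}$ and $t_{2m+1}=-pq^{m}$, I obtain $(\alpha-\beta)^{2}U_{n}^{(2)}=V_{n}+t_{n}$. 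The same computation with plus signs gives $V_{n}^{(2)}=V_{n}+s_{n}$, where $s_{2m}=2q^{m}$ and $s_{2m+1}=pq^{m}$.

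Next I would identify the recurrence each summand obeys. The Lucas-type sequence $\{V_{n}\}$ satisfies $V_{n}=pV_{n-1}-qV_{n-2}$, so it is annihilated by $x^{2}-px+q$. For the remainders, multiplying $q^{m}$ by $q$ advances $m$ by one and hence $n$ by two within a fixed parity class, so $t_{n}=q\,t_{n-2}$ and $s_{n}=q\,s_{n-2}$; that is, $\{t_{n}\}$ and $\{s_{n}\}$ are annihilated by $x^{2}-q$. Because both $x^{2}-px+q$ and $x^{2}-q$ divide $\chi(x)$, each of $\{V_{n}\}$, $\{t_{n}\}$, $\{s_{n}\}$ also satisfies the order-four recurrence with characteristic polynomial $\chi$. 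Adding, $\{V_{n}+t_{n}\}$ and $\{V_{n}+s_{n}\}$ satisfy it too; dividing the first by the nonzero constant $(\alpha-\beta)^{2}$ yields the recurrence for $\{U_{n}^{(2)}\}$, while the second is precisely the recurrence for $\{V_{n}^{(2)}\}$.

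The one step that is not purely mechanical is isolating the remainder sequences $t_{n}$ and $s_{n}$ and noticing that, despite their parity-dependent coefficients ($2$ versus $p$), they each obey the single two-term recurrence $x_{n}=qx_{n-2}$; everything after that is the closure property of linear recurrences together with the one-line factorization of $\chi(x)$. I would also note that, although $t_{n}$ and $s_{n}$ are introduced via $\alpha,\beta$, the resulting recurrence has coefficients in $p,q$ alone, so no question about the field of scalars arises.
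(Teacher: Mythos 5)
Your proposal is correct, but it takes a genuinely different route from the paper. The paper proves Theorem \ref{th:4} by direct computation: it splits into the parity cases $U_{2m}^{(2)}=U_{m}^{2}$ and $U_{2m+1}^{(2)}=U_{m}U_{m+1}$ and repeatedly substitutes the defining recurrence $U_{m}=pU_{m-1}-qU_{m-2}$ to rewrite each product as the asserted combination of $U_{n-1}^{(2)}$, $U_{n-3}^{(2)}$, $U_{n-4}^{(2)}$, leaving the $\{V_{n}^{(2)}\}$ case as ``similar.'' You instead factor the characteristic polynomial, $x^{4}-px^{3}+pqx-q^{2}=(x^{2}-px+q)(x^{2}-q)$, and decompose $(\alpha-\beta)^{2}U_{n}^{(2)}=V_{n}+t_{n}$ and $V_{n}^{(2)}=V_{n}+s_{n}$, where $\{V_{n}\}$ is annihilated by $x^{2}-px+q$ and the parity-dependent remainders $t_{n},s_{n}$ satisfy $x_{n}=qx_{n-2}$, so that closure of the solution space of the order-four recurrence finishes the argument. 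Your identities check out (e.g.\ $(\alpha^{m}-\beta^{m})^{2}=V_{2m}-2q^{m}$ and $(\alpha^{m}-\beta^{m})(\alpha^{m+1}-\beta^{m+1})=V_{2m+1}-pq^{m}$), and the index bookkeeping is right: the summand recurrences hold for $n\geq 2$, hence the order-four one for $n\geq 4$. Your approach buys several things the paper's does not: it handles $\{U_{n}^{(2)}\}$ and $\{V_{n}^{(2)}\}$ uniformly in one stroke rather than deferring the Lucas case, it explains \emph{why} an order-four recurrence with exactly these coefficients exists, and it makes structurally transparent the paper's subsequent ``shorter'' recurrences with the inhomogeneous term $q^{m-1}$, since those are precisely the statement that $U_{n}^{(2)}$ satisfies the Horadam recurrence up to the geometric defect annihilated by $x^{2}-q$. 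The one caveat is that your division by $(\alpha-\beta)^{2}=p^{2}-4q$ requires $\Delta\neq 0$; this is implicit in Definition \ref{def:1} (which already divides by $(\alpha-\beta)^{k}$), but note that the paper's computational proof uses only the recurrence and the product formulas $U_{2m}^{(2)}=U_{m}^{2}$, $U_{2m+1}^{(2)}=U_{m}U_{m+1}$, and so would survive even in the degenerate case were the sequence defined by those products instead of the Binet form.
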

\begin{proof}
There are two cases of subscript $n$. Here we use two relations for the proof $U_{2m}^{(2)}=U_{m}^{2}$ and $U_{2m+1}^{(2)}=U_{m}U_{m+1}$. Then, by using these relations we have the following 
\begin{align*}
U_{2m}^{(2)}&=U_{m}^{2} \\
&=U_{m}\left( pU_{m-1}-qU_{m-2}\right)  \\
&=pU_{m}U_{m-1}-q\left( pU_{m-1}-qU_{m-2}\right) U_{m-2} \\
&=pU_{2m-1}^{(2)}-pqU_{2m-3}^{(2)}+q^{2}U_{2m-4}^{(2)}.
\end{align*}

If we take $n=2m$ we have the following equation $U_{n}^{(2)}=pU_{n-1}^{(2)}-pqU_{n-3}^{(2)}+q^{2}U_{n-4}^{(2)}$. If $n$ is odd integer, then we have
\begin{align*}
U_{2m+1}^{(2)} &=U_{m}U_{m+1} \\
&=U_{m}\left( pU_{m}-qU_{m-1}\right)  \\
&=pU_{m}U_{m}-q\left( pU_{m-1}-qU_{m-2}\right) U_{m-1} \\
&=pU_{2m}^{(2)}-pqU_{2m-2}^{(2)}+q^{2}U_{2m-3}^{(2)}.
\end{align*}
In a similar way we can prove the result for $\{V_{n}^{(2)}\}_{n\geq0}$. This theorem is completed.
\end{proof}

We also note that if we consider separately the even and the odd terms of the above defined sequences we can obtain shorter recurrence relations. In fact, for $n = 2m$, for any natural number $m$, by Theorem \ref{th:3} (with $n = m$ and $l=1$) we have
\begin{align*}
U_{2m}^{(2)} &=U_{m+1}U_{m-1}+q^{m-1} \\
&=U_{m-1}\left( pU_{m}-qU_{m-1}\right) +q^{m-1} \\
&=pU_{m-1}U_{m}-qU_{m-1}^{2}+q^{m-1} \\
&=pU_{2m-1}^{(2)}-qU_{2m-2}^{(2)}+q^{m-1}.
\end{align*}

Similarly, for the odd case $n=2m+1$, we have
\begin{align*}
U_{2m+1}^{(2)} &=U_{m}U_{m+1} \\
&=pU_{m}U_{m}-qU_{m}U_{m-1} \\
&=pU_{2m}^{(2)}-qU_{2m-1}^{(2)}.
\end{align*}
Therefore we can conclude the following. A shorter recurrence relation for the sequence $\{U_{n}^{(2)}\}$ is given by
$$\left\{
\begin{array}{c l}
 U_{2m}^{(2)}=pU_{2m-1}^{(2)}-qU_{2m-2}^{(2)}+q^{m-1}\\
 U_{2m+1}^{(2)}=pU_{2m}^{(2)}-qU_{2m-1}^{(2)}
\end{array}
\right .
$$
for the even and the odd terms.

Next we find generating functions for these sequences. 
\begin{theorem}
The generating function $g^{(2)}(x)$ for $\{U_{n}^{(2)}\}$ is given by 
\begin{equation}
g^{(2)}(x)=\frac{x^{2}}{1-px+pqx^{3}-q^{2}x^{4}}.
\end{equation}
\end{theorem}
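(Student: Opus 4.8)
The plan is to work with the formal power series $g^{(2)}(x)=\sum_{n\ge 0}U_n^{(2)}x^n$ together with the four-term recurrence of Theorem \ref{th:4}, whose coefficients are precisely the coefficients $1,-p,0,pq,-q^2$ of the proposed denominator $1-px+pqx^3-q^2x^4$. The idea is to multiply $g^{(2)}(x)$ by this polynomial and show that almost every coefficient of the product telescopes away, leaving a single monomial.

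First I would record the initial data from the list of first values in the excerpt, namely $U_0^{(2)}=0$, $U_1^{(2)}=0$, $U_2^{(2)}=1$, $U_3^{(2)}=p$, adopting the convention $U_j^{(2)}=0$ for $j<0$. Forming the product $(1-px+pqx^3-q^2x^4)\,g^{(2)}(x)$ and collecting the coefficient of $x^n$ yields the expression $U_n^{(2)}-pU_{n-1}^{(2)}+pqU_{n-3}^{(2)}-q^2U_{n-4}^{(2)}$. For every $n\ge 4$ this vanishes identically, since it is exactly the recurrence relation established in Theorem \ref{th:4}.

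It then remains to evaluate the coefficients for $n=0,1,2,3$ directly from the initial values. These come out to $0,\,0,\,1,\,0$ respectively; the only one requiring attention is the coefficient of $x^3$, namely $U_3^{(2)}-pU_2^{(2)}=p-p=0$, which produces a genuine cancellation rather than being automatically zero. Consequently the entire product collapses to the single term $x^2$, and dividing through by the polynomial $1-px+pqx^3-q^2x^4$ gives the claimed closed form for $g^{(2)}(x)$.

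There is no conceptual obstacle once Theorem \ref{th:4} is in hand; the whole argument is a standard formal power series manipulation. The only point needing care is the bookkeeping at the low-order terms, that is, making sure the recurrence is invoked solely in its valid range $n\ge 4$ and that each of the four initial coefficients is verified by hand, in particular the cancellation occurring in the coefficient of $x^3$.
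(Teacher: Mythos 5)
Your proposal is correct and follows essentially the same route as the paper: both arguments multiply $g^{(2)}(x)$ by $1-px+pqx^{3}-q^{2}x^{4}$, use the recurrence of Theorem \ref{th:4} to annihilate every coefficient with $n\geq 4$, and check the four low-order coefficients from the initial values $U_{0}^{(2)}=U_{1}^{(2)}=0$, $U_{2}^{(2)}=1$, $U_{3}^{(2)}=p$, including the cancellation $U_{3}^{(2)}-pU_{2}^{(2)}+pqU_{0}^{(2)}=p-p+0=0$ at $x^{3}$, so that the product collapses to $x^{2}$. No changes are needed.
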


\begin{proof}
To the sum of this power series, $g^{(2)}(x)=\sum_{n=0}^{\infty }U_{n}^{(2)}x^{n}$, we call generating function of the new generalized Fibonacci sequence of numbers $\{U_{n}\}_{n\geq0}$. 
Then,
\begin{align*}
(1-px+&pqx^{3}-q^{2}x^{4})g^{(2)}(x)\\
&=(U_{0}^{(2)}+U_{1}^{(2)}x+U_{2}^{(2)}x^{2}+U_{3}^{(2)}x^{3}+\cdots)\\
&\ \ -(pU_{0}^{(2)}x+pU_{1}^{(2)}x^{2}+pU_{2}^{(2)}x^{3}+pU_{3}^{(2)}x^{4}+\cdots)\\
&\ \ +(pqU_{0}^{(2)}x^{3}+pqU_{1}^{(2)}x^{4}+pqU_{2}^{(2)}x^{5}+pqU_{3}^{(2)}x^{6}+\cdots)\\
&\ \ -(q^{2}U_{0}^{(2)}x^{4}+q^{2}U_{1}^{(2)}x^{5}+q^{2}U_{2}^{(2)}x^{6}+q^{2}U_{3}^{(2)}x^{7}+\cdots)\\
&=U_{0}^{(2)}+(U_{1}^{(2)}-pU_{0}^{(2)})x+(U_{2}^{(2)}-pU_{1}^{(2)})x^{2}+(U_{3}^{(2)}-pU_{2}^{(2)}+pqU_{0}^{(2)})x^{3}.
\end{align*}

Hence, taking into account the initial conditions of the sequence $\{U_{n}^{(2)}\}_{n\geq0}$ and by Theorem \ref{th:4}, this is equivalent to
$$(1-px+pqx^{3}-q^{2}x^{4})g^{(2)}(x)=x^{2}$$
and therefore
$$g^{(2)}(x)=\frac{x^{2}}{1-px+pqx^{3}-q^{2}x^{4}},$$
which completes the proof.
\end{proof}
\begin{theorem}
The generating function $h^{(2)}(x)$ for $\{V_{n}^{(2)}\}$ is given by 
\begin{equation}
h^{(2)}(x)=\frac{4-2px-p^{2}x^{2}+2pqx^{3}}{1-px+pqx^{3}-q^{2}x^{4}}.
\end{equation}
\end{theorem}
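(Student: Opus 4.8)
The plan is to mirror exactly the argument used for $g^{(2)}(x)$ in the preceding theorem. I would set $h^{(2)}(x)=\sum_{n=0}^{\infty}V_{n}^{(2)}x^{n}$ and multiply it by the same denominator polynomial $1-px+pqx^{3}-q^{2}x^{4}$. The crucial point is that this polynomial is precisely the characteristic polynomial attached to the four-term recurrence for $\{V_{n}^{(2)}\}$ proved in Theorem \ref{th:4}, namely $V_{n}^{(2)}=pV_{n-1}^{(2)}-pqV_{n-3}^{(2)}+q^{2}V_{n-4}^{(2)}$ for $n\geq 4$. Hence by design the only nonzero contributions to the product will come from the low-order terms $x^{0},x^{1},x^{2},x^{3}$.

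Concretely, I would expand the Cauchy product $(1-px+pqx^{3}-q^{2}x^{4})h^{(2)}(x)$ term by term, just as in the $U$-case. For every power $x^{n}$ with $n\geq 4$ the coefficient collapses to $V_{n}^{(2)}-pV_{n-1}^{(2)}+pqV_{n-3}^{(2)}-q^{2}V_{n-4}^{(2)}=0$ by Theorem \ref{th:4}, so the entire tail annihilates and we are left with a polynomial of degree at most $3$. The surviving coefficients are
\begin{align*}
[x^{0}]&=V_{0}^{(2)},\\
[x^{1}]&=V_{1}^{(2)}-pV_{0}^{(2)},\\
[x^{2}]&=V_{2}^{(2)}-pV_{1}^{(2)},\\
[x^{3}]&=V_{3}^{(2)}-pV_{2}^{(2)}+pqV_{0}^{(2)}.
\end{align*}
Substituting the initial data $V_{0}^{(2)}=4$, $V_{1}^{(2)}=2p$, $V_{2}^{(2)}=p^{2}$, and $V_{3}^{(2)}=p^{3}-2pq$, recorded among the first few values in the excerpt, these four brackets evaluate to $4$, $-2p$, $-p^{2}$, and $2pq$ respectively. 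This yields the numerator $4-2px-p^{2}x^{2}+2pqx^{3}$, and dividing through gives the claimed closed form for $h^{(2)}(x)$.

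The computation is almost entirely routine, so the main thing to be careful about, rather than a genuine obstacle, is the bookkeeping at the boundary powers $x^{0},\dots,x^{3}$: one must track exactly which of the four terms $1,-px,pqx^{3},-q^{2}x^{4}$ have already ``switched on'' at each such power, since the $pqx^{3}$ and $-q^{2}x^{4}$ contributions only begin to appear at $x^{3}$ and $x^{4}$. Getting the $x^{3}$ coefficient right, where the $pqV_{0}^{(2)}$ term first enters, is the one place where a sign or an omitted term would corrupt the numerator; everything else is a direct transcription of the proof for $g^{(2)}(x)$.
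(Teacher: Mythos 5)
Your proposal is correct and takes exactly the route the paper intends: the paper's own ``proof'' of this theorem is just the remark that it proceeds as in the $g^{(2)}(x)$ case, and you have carried out precisely that analogous computation. Your boundary coefficients check out ($V_{1}^{(2)}-pV_{0}^{(2)}=-2p$, $V_{2}^{(2)}-pV_{1}^{(2)}=-p^{2}$, $V_{3}^{(2)}-pV_{2}^{(2)}+pqV_{0}^{(2)}=2pq$), so your write-up is in fact more complete than the paper's.
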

\begin{proof}
The proof can be obtained in a similar way to the proof of above theorem. 
\end{proof}

\end{document}